\newtheorem{theorem}{\rm\bf Theorem}[section]
\newtheorem{proposition}[theorem]{\rm\bf Proposition}
\newtheorem{lemma}[theorem]{\rm\bf Lemma}
\newtheorem{remark}[theorem]{\rm\bf Remark}
\newtheorem{problem}[theorem]{\rm\bf Problem}
\newcommand{\Area}{\operatorname{Area}}
\newcommand{\sys}{\operatorname{sys}}
\newcommand{\inj}{\operatorname{inj}}
\newcommand{\conj}{\operatorname{conj}}
\newtheorem{thm}{Theorem}[section]
\newtheorem{cor}[thm]{Corollary}
\newtheorem{lem}[thm]{Lemma}
\newtheorem{prop}[thm]{Proposition}
\title{Every closed surface of genus at least $18$ is Loewner}
\author{Qiongling Li and Weixu Su}
\address{Qiongling Li: Chern Institute of Mathematics and LPMC, Nankai University, Tianjin 300071, China} \email{qiongling.li@nankai.edu.cn}
\address{Weixu Su: School of Mathematics, Sun Yat-sen University, Guangzhou 510275, China}
\email{suwx9@mail.sysu.edu.cn}
\date{\today}
\begin{document}

\begin{abstract} 
In this paper, we obtain an improved  upper bound involving the systole and area for the volume entropy of a Riemannian surface. As a result, we show that every orientable and closed Riemannian surface of genus $g\geq 18$ satisfies Loewner's systolic ratio inequality. We also show that every closed orientable and nonpositively curved Riemannnian surface of genus $g\geq 11$  satisfies Loewner's systolic ratio inequality.

 \medskip

\noindent {\bf Keywords:} Systole; Loewner's inequality; entropy. 

\medskip

\noindent  {\bf MSC2020:} {53C23, 37C35.}

	\end{abstract}

\thanks{Q. Li is partially supported by the National Key R$\&$D Program of China No. 2022YFA1006600, the Fundamental Research
Funds for the Central Universities and Nankai Zhide foundation. W. Su is partially supported by NSFC No. 12371076.}

\maketitle

\section{Introduction}

Unless otherwise stated, all surfaces we consider in this paper are assumed to be orientable and closed, endowed with Riemannian metrics.

Let $M$ be a surface of genus $g\geq 1$. Denote the \emph{systole} of $M$ by $\sys(M)$. By definition, $\sys(M)$ is the length of shortest non-contractible closed geodesics on $M$.  Denote the area of $M$ by 
$\Area(M)$. We say that the surface $M$ is \emph{Loewner} if the systolic ratio satisfies the inequality
\begin{equation}\label{equ:Loewner}
    \frac{\sys^2(M)}{\Area(M)} \leq \frac{2}{\sqrt{3}}.
\end{equation}

Around 1949, 
Loewner (ref. \cite{Pu}) first proved that every surface of genus $1$ satisfies the inequality \eqref{equ:Loewner}. 
Gromov \cite{gromov1983} showed that, for any surface of genus $g$, 
\begin{equation*}
    \frac{\sys^2(M)}{\Area(M)}< \frac{64}{4\sqrt{g}+27}.
\end{equation*}
Thus all surfaces of genus $g\geq 50$ is Loewner. 
For details on history of Loewner's inequality and related developments in  the systolic geometry,
we refer to the book of Katz \cite{katz2007}.
See also Guth \cite{Guth}.

\begin{problem}
   Whether or not all surfaces of genus at least $1$
are Loewner?
\end{problem}

For surfaces of genus $2$ and genus $g\geq 20$, the answer is affirmative, shown by Katz and Sabourau \cite{ks2005,ks2006}. 

This aim of this short note is to prove

\begin{theorem}\label{thm:loewner}
Every closed Riemannian surface of genus $g\geq 18$ is Loewner.
\end{theorem}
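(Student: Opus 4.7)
The strategy is the volume-entropy approach pioneered by Katok and developed by Katz--Sabourau: combine a lower bound and an upper bound on the volume entropy
\[
h(M) \;=\; \lim_{R\to\infty}\frac{1}{R}\log V_{\widetilde{M}}\bigl(B(\widetilde{x}_0,R)\bigr),
\]
where one of them involves $\Area(M)$ and the other $\sys(M)$. The lower bound is Katok's classical entropy inequality, which for every closed Riemannian surface of genus $g\geq 2$ asserts
\[
h(M)^2 \, \Area(M) \;\geq\; 4\pi(g-1).
\]
The upper bound is the improved inequality promised by the abstract, of the form $h(M)\,\sys(M) \leq \varphi(g)$ for an explicit function $\varphi$. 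Combining the two,
\[
\frac{\sys(M)^2}{\Area(M)} \;\leq\; \frac{\varphi(g)^2}{4\pi(g-1)},
\]
and Theorem \ref{thm:loewner} reduces to the numerical check that the right-hand side does not exceed $2/\sqrt{3}$ as soon as $g\geq 18$.

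To establish the improved upper bound on $h(M)\,\sys(M)$, the plan is to work in the universal cover $\widetilde{M}$. Since any ball of radius $\sys(M)/2$ in $\widetilde{M}$ embeds isometrically into $M$, it has area at most $\Area(M)$. I would then count the orbit points of the $\pi_1(M)$-action inside a large ball $B(\widetilde{x}_0,R)\subset\widetilde{M}$, using a generating set of $\pi_1(M)$ consisting of geodesic loops controlled by $\sys(M)$, and exploit the single surface relator to improve upon the naive free-group growth rate $4g-1$. A packing/covering argument (each translate of a $\sys/2$-ball is disjoint from the others) then converts the orbit count into a volume estimate on $B(\widetilde{x}_0,R)$, from which one reads off an upper bound on $h(M)$ of the desired form $\varphi(g)/\sys(M)$.

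The hard part will be to make $\varphi(g)$ sharp enough to push the threshold from $g=20$ (Katz--Sabourau) down to $g=18$. This is a three-way trade-off: the generators of $\pi_1(M)$ must be short enough for the word-length to metric-length conversion to be tight, the surface relation must be used aggressively to beat the free-group growth, and the packing argument in $\widetilde{M}$ must be refined beyond the straightforward volume-by-area inequality -- presumably by incorporating the contribution of the area of a ball of radius slightly larger than $\sys/2$, which is still controlled by a local injectivity statement plus Gauss--Bonnet. It is precisely the tightening of these three ingredients that the paper advertises as its improved entropy bound.

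Once $\varphi(g)$ is in hand, the final step is the elementary verification
\[
\frac{\varphi(g)^2}{4\pi(g-1)} \;\leq\; \frac{2}{\sqrt{3}} \qquad \text{for every integer } g\geq 18,
\]
which, given the concrete form of $\varphi$, reduces to comparing an expression growing like $(\log g)^2$ or $g^{1/2}\log g$ with a linear function of $g$, and is easily checked by hand at the endpoint $g=18$ together with monotonicity of the ratio in $g$.
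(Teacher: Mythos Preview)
Your overall architecture---combine Katok's lower bound $h(M)^2\Area(M)\geq 4\pi(g-1)$ with an upper bound on the entropy---is the right one, and is exactly what the paper does. But from that point on the proposal diverges from the paper in two essential ways, and in one of them there is a genuine gap.

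\textbf{Shape of the upper bound.} The upper bound on $h(M)$ in the paper is \emph{not} of the form $h(M)\sys(M)\leq\varphi(g)$. It is Proposition~\ref{thm:entropy}, which reads
\[
h(M)\leq\frac{1}{\beta\,\sys(M)}\log\frac{\Area(M)-\min_q\Area\bigl(B(q,(\beta-3\alpha)\sys(M))\bigr)}{\min_q\Area\bigl(B(q,\alpha\,\sys(M))\bigr)},
\]
so the quantity $h(M)\sys(M)$ is controlled by a function of the systolic ratio $\Area(M)/\sys(M)^2$, not of the genus. The genus enters \emph{only} through Katok's inequality. Accordingly the proof is by contradiction: one assumes $M$ is not Loewner, so that $\Area(M)/\sys(M)^2<\sqrt{3}/2$, and uses this twice---once to make the right-hand side above an absolute constant, and once more when passing from Katok's bound to an inequality on $g$.

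\textbf{Mechanism for the upper bound.} Your proposed mechanism (choose short generators for $\pi_1(M)$, exploit the single surface relator to beat free-group growth, pack translates of a $\sys/2$-ball) is not what is done and, as stated, has a gap: a generating set whose elements have length comparable to $\sys(M)$ will typically have far more than $2g$ elements, and there is no reason the surface relation gives any usable control on growth with respect to \emph{that} generating set. The paper instead proves Proposition~\ref{thm:entropy} by a nerve/covering argument: it lays down a maximal $\alpha\sys(M)$-packing $\mathcal{S}\subset M$, shows that every based geodesic loop of length $\leq T$ is homotopic to a combinatorial path through points of $\mathcal{S}$ with at most $\lceil T/(\beta\sys(M))\rceil$ steps, and bounds the number of admissible successors at each step by a ratio of areas. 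This is where the lower bounds on $\Area(B(q,r))$ enter.

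\textbf{A step you are missing entirely.} To feed usable area-of-ball lower bounds into Proposition~\ref{thm:entropy} without any curvature hypothesis, the paper invokes Gromov's conformal deformation (Lemma~\ref{lemma:smallheight}): one replaces $M$ by a conformal metric $\overline{M}$ with the same systole, no larger area, and height function smaller than any prescribed $\delta\sys(M)$, so that Gromov's inequality $\Area(B(x,R))\geq\frac{1}{2}(2R-\rho(x))^2$ becomes effective. This reduction is essential; without it one only has Croke's bound on balls of radius $\leq\inj(M)/2$, which is why the paper's argument for arbitrary metrics (Theorem~\ref{thm:loewner}) and for metrics with $\inj(M)=\sys(M)/2$ (Proposition~\ref{prop:small}) are organized differently. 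Your sketch contains no analogue of this step.

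Finally, the numerical endgame is a one-line optimization in the free parameters $\alpha,\beta,\delta$, yielding $g-1<16.88$; there is no monotonicity-in-$g$ argument needed, since the bound on $g-1$ is an absolute constant once non-Loewner is assumed.
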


Our proof of Theorem \ref{thm:loewner} uses the argument in \cite{ks2005}. We improve the upper bound for the volume entropy, which gives a better bound on the genus when the surface is not Loewner. 

For $3\leq g\leq 17$, the problem remains unsolved. In Proposition \ref{prop:small}, we show that all surfaces of 
of genus at least $17$ satisfying  
\begin{equation}\label{equ:injectivesystol}
\inj(M)=\sys(M)/2
\end{equation}
are Loewner. The hypothesis \eqref{equ:injectivesystol} is true for nonpositively curved surfaces, see
Proposition \ref{prop:injsys}.
We can do even better for nonpositively curved surfaces using the method of Gromov and Bishop-G\"unther volume comparison theorem. 
\begin{theorem}\label{thm:nonpositive}
    Every closed nonpositively curved Riemannnian surface of genus $g\geq 11$ is Loewner. 
\end{theorem}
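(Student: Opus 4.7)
The plan is to run the contradiction argument behind Proposition \ref{prop:small}, but to strengthen the volume-entropy upper bound by exploiting the Euclidean area comparison that Bishop--G\"unther makes available in the nonpositively curved setting. Suppose, for contradiction, that $M$ is a closed orientable Riemannian surface of genus $g\geq 11$ with sectional curvature $K\leq 0$ and
\[
\frac{\sys^2(M)}{\Area(M)}>\frac{2}{\sqrt{3}}.
\]
By Proposition \ref{prop:injsys} we have $\inj(M)=\sys(M)/2$, so every geodesic ball in $M$ of radius $r<\sys(M)/2$ lifts isometrically to an embedded ball in the universal cover $\widetilde{M}$. Since $K\le 0$, Cartan--Hadamard and Bishop--G\"unther (with Euclidean model space) give
\[
\Area_{\widetilde{M}}\bigl(\widetilde{B}(p,r)\bigr)\;\geq\;\pi r^{2}
\]
for every $p\in\widetilde{M}$ and every $r\ge0$; in particular, embedded balls of radius $r<\sys(M)/2$ in $M$ have area at least $\pi r^{2}$.

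Next I would rerun the Gromov-style packing estimate that produced the improved upper bound on $h\cdot\sys$ behind Theorem \ref{thm:loewner}, but now with the sharper Euclidean lower bound $\pi r^2$ replacing the weaker ball-area estimate used for a general surface with $\inj=\sys/2$. In the universal cover, pick a maximal collection of disjoint embedded balls of radius slightly less than $\sys(M)/2$ centered on the orbit of a basepoint under $\pi_{1}(M)$, and compare the total area of such balls inside $\widetilde{B}(p,R)$ to the exponential growth rate of the orbit. Bishop--G\"unther raises the denominator in the resulting ratio, yielding a refined inequality of the shape $h(M)^{2}\sys(M)^{2}\leq \Phi\bigl(\sys^{2}/\Area\bigr)$ that improves on the one used to reach $g\geq 17$ in Proposition \ref{prop:small}.

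To close the argument I would invoke Katok's entropy inequality $h(M)^{2}\,\Area(M)\geq 4\pi(g-1)$ (comparison with the hyperbolic metric in the same genus), which combined with the hypothesis gives
\[
h(M)^{2}\,\sys(M)^{2}\;\geq\;4\pi(g-1)\cdot\frac{\sys^{2}(M)}{\Area(M)}\;>\;\frac{8\pi(g-1)}{\sqrt{3}}.
\]
Pairing this lower bound with the Bishop--G\"unther enhanced upper bound from the previous step produces an inequality purely in $g$, which I expect to fail precisely when $g\geq 11$, delivering the desired contradiction.

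The main obstacle is bookkeeping in the packing step: the constants in the Bishop--G\"unther-sharpened entropy estimate must be tracked carefully enough that the resulting elementary inequality in $g$ fails already at $g=11$ rather than some larger threshold. In particular one must confirm that the full Euclidean factor $\pi r^{2}$ (as opposed to whatever weaker bound is available for an arbitrary surface with $\inj=\sys/2$) is exactly what is needed to push the Katz--Sabourau-type numerics from the $g\geq 17$ regime of Proposition \ref{prop:small} down to $g\geq 11$, and that no further curvature lower bound is silently required.
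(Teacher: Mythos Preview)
Your plan is the obvious refinement of Proposition~\ref{prop:small}: keep the entropy inequality of Proposition~\ref{thm:entropy}, replace Croke's lower bound $\Area(B(q,R))\ge\frac{8-\pi}{2}R^{2}$ by the Bishop--G\"unther bound $\Area(B(q,R))\ge\pi R^{2}$ (now valid for all $R\le\inj(M)=\tfrac{1}{2}\sys(M)$), and chase the numerics. The problem is that this does \emph{not} reach $g\ge 11$. If you carry it out, you get, for $5\alpha<\beta$ and $\beta+4\alpha<\tfrac12$,
\[
g-1\;<\;\frac{\sqrt{3}}{8\pi\beta^{2}}\Bigl(\log\frac{\sqrt{3}/2-\pi(\beta-3\alpha)^{2}}{\pi\alpha^{2}}\Bigr)^{2},
\]
and optimizing (e.g.\ $\alpha\approx 0.0236$, $\beta\approx 0.405$) gives $g-1\lesssim 13.53$. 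So the entropy route with Euclidean area comparison only yields ``$g\ge 15$ is Loewner''. Your expectation that the inequality ``fails precisely when $g\ge 11$'' is simply not borne out; the constants are what they are, and there is a genuine gap of four genera that no amount of bookkeeping in the packing step of Proposition~\ref{thm:entropy} will close.

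The paper abandons the entropy method entirely for this theorem and uses Gromov's graph-theoretic argument from \cite[\S5.3]{gromov1983} instead. One fixes $x\in M$, chooses a canonical basis $\gamma_{1},\ldots,\gamma_{2g}$ of $\pi_{1}(M,p_{0})$ lying in $M\setminus B(x,\sys(M)/2)$, packs the union $\Gamma=\bigcup\gamma_{i}$ by disjoint balls of radius $\sys(M)/8$ with centers $\mathcal{S}$, and homotopes each $\gamma_{i}$ to a piecewise geodesic with vertices in $\mathcal{S}$. This produces a connected graph $\Gamma'$ on at most $|\mathcal{S}|$ vertices with $b_{1}(\Gamma')\ge 2g$, hence $2g\le\binom{|\mathcal{S}|-1}{2}$. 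The Bishop--G\"unther bound enters only to estimate $|\mathcal{S}|$: the disjoint $\tfrac{\sys}{8}$-balls sit inside $M\setminus B(x,\tfrac{3}{8}\sys)$, so
\[
|\mathcal{S}|\le\frac{\Area(M)-\pi(3\sys/8)^{2}}{\pi(\sys/8)^{2}}\le\frac{64}{\pi}\cdot\frac{\Area(M)}{\sys^{2}(M)}-9.
\]
If $M$ is not Loewner this gives $|\mathcal{S}|\le 8$, whence $2g\le 21$ and $g\le 10$. The key difference from your proposal is that this argument bypasses Katok's inequality and the logarithmic loss inherent in the entropy estimate; it is a direct combinatorial bound on $g$ in terms of a ball-packing count, and that is what buys the extra four genera.
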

\bigskip

\textbf{Acknowledgements.} 
We are grateful to Christpher Croke, Mikhail Katz and St\'ephane Sabourau  for their very useful comments.

\section{An upper bound for the entropy}

\subsection{Entropy}

Let $M$ be a surface of genus $g$. 
Let $\widetilde M$ be the universal cover of $M$. 
The \emph{volume entropy} of $M$ is defined by 
$$h(M)=\lim_{R\to +\infty} \frac{\log \operatorname{Area}(B(\tilde{x}_0, R))}{R}.$$
The limit exists and does not depend on the choice of $\tilde{x}_0\in \widetilde{M}$ (see \cite{Manning}). 

 By a classical result of Katok \cite{katok1983}, we have 
$$h(M)^2 \geq \frac{2\pi (2g-2)}{\Area(M)}.$$

The following equivalent definition of $h(M)$ is very useful.

\begin{lemma}\label{lem:entropy}
    Let $N(T)$ be the number of homotopy classes of non-contractible closed curves on $M$, based at some fixed point $x_0\in M$ and which can be represented by loops of length at most $T$. Then
    $$h(M)= \lim_{T\to +\infty} \frac{\log N(T)}{T}.$$
\end{lemma}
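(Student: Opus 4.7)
The plan is to realize $N(T)$ as an orbit counting function on the universal cover $\widetilde{M}$ and then compare orbit counts with ball areas via a fundamental domain argument. Fix a lift $\tilde{x}_0\in\widetilde{M}$ of the basepoint $x_0$, and identify $\pi_1(M,x_0)$ with the group of deck transformations of $\widetilde{M}\to M$. The first ingredient is the classical observation that the infimum of lengths of loops at $x_0$ representing a nontrivial class $\gamma$ equals $d_{\widetilde{M}}(\tilde{x}_0,\gamma\tilde{x}_0)$, realized by projecting the $\widetilde{M}$-geodesic from $\tilde{x}_0$ to $\gamma\tilde{x}_0$ down to $M$. Hence
$$N(T)\;=\;\#\bigl\{\gamma\in\pi_1(M,x_0)\setminus\{e\}\,:\,d_{\widetilde{M}}(\tilde{x}_0,\gamma\tilde{x}_0)\leq T\bigr\}.$$

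Next I would pick a relatively compact fundamental domain $\mathcal{F}$ for the deck action containing $\tilde{x}_0$ (for instance the Dirichlet domain based at $\tilde{x}_0$), and set $D:=\operatorname{diam}(\mathcal{F})<\infty$. The translates $\{\gamma\mathcal{F}\}_{\gamma\in\pi_1(M,x_0)}$ tile $\widetilde{M}$ with pairwise intersections of measure zero and each has area $\Area(M)$. The triangle inequality gives two inclusions: on the one hand, $d_{\widetilde{M}}(\tilde{x}_0,\gamma\tilde{x}_0)\leq R-D$ forces $\gamma\mathcal{F}\subset B(\tilde{x}_0,R)$; on the other, every point of $B(\tilde{x}_0,R)$ lies in some $\gamma\mathcal{F}$ with $d_{\widetilde{M}}(\tilde{x}_0,\gamma\tilde{x}_0)\leq R+D$. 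Passing to areas,
$$\Area(M)\cdot\bigl(N(R-D)+1\bigr)\;\leq\;\Area\bigl(B(\tilde{x}_0,R)\bigr)\;\leq\;\Area(M)\cdot\bigl(N(R+D)+1\bigr).$$

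Finally, take logarithms, divide by $R$, and let $R\to\infty$. The terms $\log\Area(M)$ and the additive shifts $\pm D$ contribute $o(1)$ in the limit, so a squeeze argument — combined with Manning's theorem guaranteeing existence of the area-based limit $h(M)$ — forces $\lim_{T\to\infty}T^{-1}\log N(T)$ to exist and to equal $h(M)$. There is no real obstacle here; the only mild subtlety is ensuring the fundamental domain has finite diameter, which is routine for the universal cover of a closed Riemannian surface.
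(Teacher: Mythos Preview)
Your argument is correct and is exactly the standard orbit-counting/fundamental-domain proof of this well-known fact. The paper does not supply its own proof but simply refers to \cite[Lemma~3.6]{ks2005}, whose argument is essentially the one you wrote.
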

See \cite[Lemma 3.6]{ks2005} for the proof. 

\subsection{Upper bound}

The next inequality is the key and technical part of this paper. It relates the volume entropy to the systolic ratio. It improves \cite[Proposition 3.1]{ks2005}.

\begin{proposition}\label{thm:entropy}
    Every surface $M$ of genus $g\geq 1$ satisfies 
    $$h(M) \leq \frac{1}{\beta\sys(M)} \log \frac{\Area(M)-\min\limits_{q\in M} \Area (B(q,(\beta-3\alpha)\sys(M)))}{\min\limits_{q\in M} \Area (B(q,\alpha\sys(M)))}, $$
    whenever $0<5\alpha<\beta, \beta+4\alpha< \frac{1}{2}$. 
\end{proposition}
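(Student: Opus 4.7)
The plan is to follow the counting scheme of Katz--Sabourau \cite{ks2005} but sharpen the volume accounting by subtracting off a ball that is forced to be disjoint from the ball around the next sampled point on a length--minimizing representative. By Lemma~\ref{lem:entropy}, it suffices to exhibit a bound of the form
\[
N(T)\le C\cdot k^{\lfloor T/(\beta\sys(M))\rfloor}
\]
with $k$ equal to the quantity inside the logarithm; passing to $\log$, dividing by $T$, and letting $T\to\infty$ then yields the claim on $h(M)$.

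Fix a base point $x_0\in M$. For each homotopy class of loops based at $x_0$ of length at most $T$, choose a length--minimizing representative $\gamma$, parameterize by arc length, and set $n=\lfloor\mathrm{length}(\gamma)/(\beta\sys(M))\rfloor$ together with sample points $p_k:=\gamma(k\beta\sys(M))$ for $k=0,\dots,n$. Since $\beta<1/2$, each consecutive subarc has length $\beta\sys(M)<\sys(M)/2$, so once an initial lift $\widetilde{p}_0\in\widetilde{M}$ is fixed, the unique lift $\widetilde{p}_{k+1}$ of $p_{k+1}$ lying within $\beta\sys(M)$ of $\widetilde{p}_k$ is well--defined (any two distinct lifts of the same point are separated by at least $\sys(M)$). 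Hence the chain $(p_0,\dots,p_n)\in M^{n+1}$ determines the sequence of lifts, and thus the homotopy class of $\gamma$; it therefore suffices to bound the number of admissible chains.

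The technical heart is the two--step separation estimate
\[
d(p_{k-1},p_{k+1})\ge (\beta-3\alpha)\sys(M),\qquad 1\le k\le n-1,
\]
which I would derive from the length--minimality of $\gamma$. The subarc of $\gamma$ from $p_{k-1}$ to $p_{k+1}$ has length $2\beta\sys(M)<\sys(M)$, so it lifts into a simply connected region of $\widetilde{M}$. If $d(p_{k-1},p_{k+1})$ were smaller than $(\beta-3\alpha)\sys(M)$, one could replace this subarc by a shorter path between the same endpoints (still in the same homotopy class rel endpoints, since one is operating in a simply connected lift) and reinsert the shortcut in $\gamma$, producing a strictly shorter loop in the same homotopy class and contradicting the choice of $\gamma$. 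The slack $3\alpha$ is tuned so that the modification fits within the simply connected region and remains compatible with the $\alpha\sys(M)$--balls used in the subsequent volume count.

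With the separation estimate in hand, the volume count proceeds inductively. Given $(p_0,\dots,p_k)$, the point $p_{k+1}$ must lie in $M\setminus B(p_{k-1},(\beta-3\alpha)\sys(M))$, a region of area at most $\Area(M)-\min_{q\in M}\Area(B(q,(\beta-3\alpha)\sys(M)))$. To each admissible candidate I attach the open ball $B(p_{k+1},\alpha\sys(M))$; the hypotheses $5\alpha<\beta$ and $\beta+4\alpha<1/2$ are precisely tuned so that the balls attached to distinct candidates are pairwise disjoint in $M$ and all sit inside this excluded region, whence summing areas bounds the number of choices for $p_{k+1}$ by the ratio $k$ displayed in the statement. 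Chaining gives $N(T)\le C\cdot k^{\lfloor T/(\beta\sys(M))\rfloor}$ with $C$ absorbing the unconstrained choice of $p_1$, and Lemma~\ref{lem:entropy} concludes. The main obstacle will be the separation estimate: the shortcut argument must be executed in a simply connected lift and be compatible both with preserving the homotopy class and with the geometry of the $\alpha\sys(M)$--balls carrying the packing count, and this delicate bookkeeping is exactly what yields the improvement over \cite{ks2005}.
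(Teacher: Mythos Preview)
Your proposal has two genuine gaps.

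\textbf{No discretization, so the packing count is vacuous.} The sample points $p_{k+1}=\gamma((k{+}1)\beta\sys(M))$ range over a continuum as $\gamma$ varies over minimizing loops, so ``the number of choices for $p_{k+1}$'' is not finite, and the balls $B(p_{k+1},\alpha\sys(M))$ around two nearby candidates are certainly not disjoint. In the paper this is handled by first fixing a maximal $\alpha\sys(M)$--net $\mathcal S\subset M$ (so that the balls $\{B(q,\alpha\sys(M)):q\in\mathcal S\}$ are disjoint \emph{by construction}) and then encoding each loop, up to based homotopy, by a finite word $(q_0,\dots,q_m)$ in $\mathcal S$; only then does an area estimate translate into a cardinality bound on the successor $q_{k+1}$.

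\textbf{The claimed separation does not follow from minimality.} Your shortcut argument for $d(p_{k-1},p_{k+1})$ only shows that a minimizing segment of length $d$ is homotopic rel endpoints to the subarc of length $2\beta\sys(M)$ once their concatenation has length $<\sys(M)$; this yields merely $d\ge(1-2\beta)\sys(M)$. For the parameters that matter (e.g.\ $\beta\approx 0.39$, $\alpha\approx 0.026$) one has $1-2\beta\approx 0.21<0.31\approx \beta-3\alpha$, so the bound you assert is strictly stronger than what your argument delivers. Note also that $\alpha$ has not yet entered your construction at that stage, so it cannot appear in a separation estimate derived purely from the $\beta$--sampling. The paper's mechanism is different: it proves a \emph{one--step} separation $d(q_k,q_{k+1})\ge(\beta-2\alpha)\sys(M)$ between \emph{consecutive net points}, via a dichotomy on whether the net points covering $c((\gamma_k+\beta+2\alpha)s)$ are already $(\beta-2\alpha)s$--far from $q_k$. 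If so, one walks a short chain of overlapping $2\alpha s$--balls along $c$ and selects the first far one; if not, one takes the longer step of arc length $(\beta+2\alpha)s$ along $c$ and the triangle inequality forces the bound. This case analysis genuinely uses the net $\mathcal S$ and is what produces the $(\beta-3\alpha)$ radius in the numerator.
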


\begin{proof}

Let $s$ be the systole of $M$.
Let $\mathcal S$ be a maximal set of centers of disjoint disks of radius $\alpha s$. So $\{B(q,2\alpha s) \ | \  q\in \mathcal S\}$ forms a cover of $M$.

For each closed geodesic path $c$ based at $x_0$ of length $\leq T$, we can always construct a homotopic path connected by at most $m:=\lceil\frac{T}{\beta s}\rceil$ points in the set $\mathcal S$ in the following way.

Let $t\in [2,3)$ to be determined. 
We will choose $\underline c(k)=c(\gamma_k s)\in c$ and $q_k\in \mathcal S$ in each step.
We start with $\underline c(0)=c(0)=x_0$ (thus $\gamma_0=0$). Choose $q_0\in \mathcal S$ such that $c(0)\in B(q_0, 2\alpha s)$.

We define by induction. Suppose we have defined $\gamma_k$ and $q_k$. By the construction, $\underline c(k)=c(\gamma_k s)\in B(q_k, 2\alpha s)$. Now we are going to define $\gamma_{k+1}$ and $q_{k+1}$.

We separate into two cases.
Note that in the following discussion,
the constants $\alpha, \beta$ satisfy
$$\beta>5\alpha, \beta+4\alpha<\frac{1}{2}.$$

\textbf{Case 1}: Suppose that any $q\in \mathcal S$ such that $d(c((\gamma_k+\beta+t\alpha)s), q)<2\alpha s$ satisfies $$d(q_k,q)\geq (\beta-t\alpha)s.$$

Choose $\hat q_1\in \mathcal S$ such that $$d(c((\gamma_k+\beta)s),\hat q_1)<2\alpha s.$$ Let $r_1\in [(\gamma_k+\beta)s,(\gamma_k+\beta+t\alpha)s]$ be the minimal number such that $d(\underline c(r),\hat q_1)<2\alpha s$ does not hold. So $d(c(r_1),\hat q_1)= 2\alpha s$. Then choose  $\hat q_2\in \mathcal S$ such that $d(c(r_1),\hat q_2)<2\alpha s.$  We continue the process to choose $r_k$'s and $\hat q_k$'s. Note that $$d(\hat q_l,\hat q_{l+1})\leq d(c(r_l),\hat q_l)+d(c(r_l),\hat q_{l+1})<4\alpha s.$$

\begin{figure}[htb]
	\begin{tikzpicture}
		\draw [thick] (-5,0) -- (7,0);
  \draw[fill] (-3.8,0) circle [radius=0.05];
  \node [below] at (-3.8,0) {$\underline c(k)$};
  \draw[fill] (3,0) circle [radius=0.05];
  \node [below] at (2.5,-0.3) {$c((\gamma_k+\beta)s)$};
   \draw[fill] (6,0) circle [radius=0.05];
  \node [below] at (7,0) {$c((\gamma_k+\beta+t\alpha)s)$};
\draw [red, ultra thick] (-3.5,-0.8) circle [radius=1];
  \draw[fill] (-3.5,-0.8) circle [radius=0.05];
  \node [below] at (-3.5,-0.8) {$q_k$};
  \draw [red, ultra thick] (3,0.7) circle [radius=1];
  \draw[fill] (3,0.7) circle [radius=0.05];
  \node [above] at (3,0.7) {$\hat q_1$};
  \draw [red, ultra thick] (4.5,-0.2) circle [radius=1];
  \draw[fill] (4.5,-0.2) circle [radius=0.05];
  \node [below] at (4.5,-0.2) {$\hat q_2$};
	\end{tikzpicture}
	\caption{Picture in Case 1. The disks $B(\hat{q}_k)$'s cover the segment connecting $c((\gamma_k+\beta)s)$ 
 and $c((\gamma_k+\beta+t\alpha)s)$. There is a smallest $l_0$  such that $d(q_k,\hat q_{l_0})\geq (\beta-t\alpha)s.$} 
	\label{fig_04}
\end{figure}
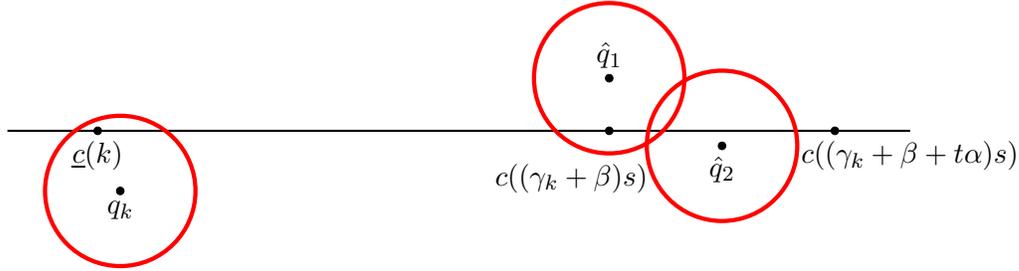

Let $l_0$ be the smallest integer such that $d(q_k,\hat q_{l_0})\geq (\beta-t\alpha)s.$
In fact, the distance between $c((\gamma_k+\beta+t\alpha)s)$
and $q_k$ is not less than $\beta+t\alpha-2\alpha$, and
if $c((\gamma_k+\beta+t\alpha)s)$ is contained in some disk 
$B(\hat q_j, 2\alpha)$ where $\hat q_j\in \mathcal S$, then 
$d(q_k, \hat q_j)> (\beta+t\alpha-4\alpha)s \geq (\beta-t\alpha)s$. Thus such an $l_0$ exists. 

Set $q_{k+1}=\hat q_{l_0}$ and $\underline c(k+1)=c(r_{l_0}).$

If $l_0=1$,then $\underline c(k+1)=c((\gamma_k+\beta)s).$ The length of the circle joining $\underline c(k), q_k, \underline c(k+1), q_{k+1}$ is less than 
\[2\alpha s+(\beta+4\alpha)s+2\alpha s+\beta s=(2\beta+8\alpha)s<s.\]
So the circle is null-homotopic.

If $l_0>1$,then $d(q_k,\hat q_{l_0-1})< (\beta-t\alpha)s.$  
So \[d(q_k,\hat q_{l_0})\leq d(q_k,\hat q_{l_0-1})+d(\hat q_{l_0-1},\hat q_{l_0})\leq (\beta-t\alpha)s+4\alpha s=(\beta-t\alpha+4\alpha)s.\]
The length of the circle joined by $\underline c(k), q_k, \underline c(k+1), q_{k+1}$ is less than 
\[2\alpha s+(\beta-t\alpha+4\alpha)s+2\alpha s+(\beta+t\alpha)s=(2\beta+8\alpha)s<s.\]
Thus the circle is also null-homotopic.\\

\textbf{Case 2:} Suppose there exists $q\in \mathcal S$ such that $d(c((\gamma_k+\beta+t)s), q)<2\alpha s$ and $d(q_k,q)<(\beta-t\alpha)s.$ Then set $\underline c(k+1)=c((\gamma_k+\beta+t\alpha)s)$ and $q_{k+1}=q.$ The length of the circle joined by $\underline c(k), q_k, \underline c(k+1), q_{k+1}$ is less than 
\[2\alpha s+(\beta-t\alpha)s+2\alpha s+(\beta+t\alpha)s=(2\beta+4\alpha)s<s.\]
So the circle is null-homotopic.\\

Let $\mathcal R$ denote the set of all closed geodesics based at $x_0$ with length less than $T$. Now the above process allows us to construct for each closed geodesic $c\in \mathcal R$ a homotopic piecewise geodesic loop  by connecting points in $\mathcal S$.
 From the construction, each such representation of $c$ satisfies:
\begin{itemize}
    \item Either $d(q_k, q_{k+1})\geq (\beta-t\alpha)s$ and $\beta s\leq d(\underline c(k), \underline c(k+1))\leq (\beta+t\alpha)s,$ or,
    \item  $d(q_k, q_{k+1})< (\beta-t\alpha)s$ and $d(\underline c(k), \underline c(k+1))\geq (\beta+t\alpha) s.$ 
\end{itemize}

In the latter case, we have
\begin{eqnarray*}
    d(q_k, q_{k+1}) &\geq& d(\underline c(k), \underline c(k+1))- 4\alpha s \\
    &\geq& (\beta+t\alpha-4\alpha) s.
\end{eqnarray*}

If we take $t=2$, then  in both cases we have
\begin{equation}\label{equ:gap}
    d(q_k, q_{k+1}) \geq (\beta-2\alpha)s. 
\end{equation}

Since $d(q_k, q_{k+1})\geq (\beta-2\alpha)s,$ we have $B(q_{k+1}, \alpha s)\in M\setminus B(q_k,(\beta-3\alpha)s).$ 
 Let $S^{(k)}$ be the set of $q_l$ satisfying $B(q_l, \alpha s)\in M\setminus B(q_k,(\beta-3\alpha)s).$ So after $q_k$ is determined, the choices of $q_{k+1}$ is at most 
\begin{eqnarray*}
  |S^{(k)}|&\leq& \frac{\Area (M)-\Area (B(q_k,(\beta-3\alpha)s))}{\min\limits_{q\in \mathcal{S}} \Area (B(q,\alpha s))}\\
  &\leq& \frac{\Area (M)- \min\limits_{p\in M}\Area (B(q,(\beta-3\alpha)s))}{\min\limits_{q\in M} \Area (B(q,\alpha s))}. \\
\end{eqnarray*}
Denote the last term by $I.$ 

Let $m=\lceil\frac{T}{\beta s}\rceil.$
Then 
\begin{eqnarray*}
|\mathcal R| &\leq & I^m.
\end{eqnarray*}

Therefore, by Lemma \ref{lem:entropy},
\[ h(M)\leq \lim_{m\rightarrow \infty}\frac{1}{m\beta s}\log |\mathcal R|\leq \frac{1}{\beta s}\log I.\]
\end{proof}

We say that a metric disk $B(x,R)$ on $M$ is \emph{small} if it is diffeomorphic to an two dimensional Euclidean disk and  all geodesics in $B(x,R)$ connecting two boundary points of $B(x,R)$ minimize the distance between the endpoints. 

Denote by $\inj(M)$ the injectivity radius of $M$.  Note that all metric disks of radius less than $\inj(M)/2$ are small.

We recall the following result of Croke \cite[Corollary 1.2]{croke2009}.

\begin{lemma}\label{equ:Croke}
If $B(x,R)$ is small metric disk of $M$, e.g. $R\leq \inj(M)/2$, then 
$$\Area (B(x,R)) \geq \frac{8-\pi}{2} R^2.$$
\end{lemma}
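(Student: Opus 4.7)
The plan is to establish the lower bound via integral geometry on the unit tangent bundle $UB$ of $B=B(x,R)$, in the spirit of Croke's earlier isoperimetric-type work. The only place where the smallness hypothesis will enter is the observation that every geodesic segment with both endpoints on $\partial B$ is length-minimizing, and hence has length at most the diameter $2R$.

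First I would set up geodesic polar coordinates $(r,\theta)$ centered at $x$. Because $B$ is small, the exponential map identifies the Euclidean disk of radius $R$ with $B$, and the area form is $J(r,\theta)\,dr\,d\theta$ for a nonnegative Jacobian $J$. Next I would apply Santal\'o's formula to the geodesic flow on $UB$: the Liouville measure of $UB$ equals $2\pi\,\Area(B)$, and after disintegrating along geodesic chords one obtains the identity
\[
2\pi\,\Area(B)=\int_{\partial^+UB}\ell(v)\cos\theta(v)\,d\mu(v),
\]
where $\partial^+UB$ is the set of inward unit tangent vectors along $\partial B$ and $\ell(v)$ is the exit time of the geodesic starting at $v$. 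Coupling this identity with the bound $\ell(v)\leq 2R$ immediately yields a Santal\'o-type \emph{upper} bound $\Area(B)\leq (R/\pi)L(\partial B)$.

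To turn this circle of ideas into a \emph{lower} bound on $\Area(B)$ in terms of $R$ alone, I would fix an interior point $q\in B$ and analyze, inside $U_qM$, the angular measure of directions $v$ along which the forward geodesic exits $B$ after time in a prescribed interval; smallness forces this exit time to lie in the window $[R-d(x,q),\,R+d(x,q)]$. Integrating the resulting pointwise angular estimates against $q\in B$, using that the total angle at every interior point equals $2\pi$, and then closing the loop with Cauchy--Schwarz applied to the angular integration, should produce $\Area(B)\geq c R^2$ for an explicit constant $c$.

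The main obstacle will be arriving at the sharp constant $c=(8-\pi)/2$. Identifying the extremizer (toward which smooth small disks can only degenerate), tracking constants through each integral geometric step, and pinning down the equality case of Cauchy--Schwarz is where I expect the bulk of the real work to lie; any slack here degrades the constant and, via Proposition \ref{thm:entropy}, weakens the genus bound in Theorem \ref{thm:loewner}. As a safety net I would also attempt a more elementary route using four pairwise orthogonal geodesic diameters through $x$ to split $B$ into four ``quadrants'' and estimating each quadrant area below directly from the minimizing property, though I would expect such a variant to recover only a strictly smaller constant than Croke's sharp $(8-\pi)/2$.
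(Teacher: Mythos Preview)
The paper does not prove this lemma at all: it is quoted as Corollary~1.2 of Croke~\cite{croke2009} and used as a black box in Corollary~\ref{cor:entropyinj} and Proposition~\ref{prop:small}. There is no in-paper argument to compare your sketch against beyond that citation.

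That said, your Santal\'o setup and the exit-time window $\tau(q,v)\in[R-d(x,q),\,R+d(x,q)]$ are both correct for a small disk, and integral geometry on $UB$ is indeed the tool Croke uses. The genuine gap in your outline is the passage from these ingredients to a lower bound on $\Area(B)$ in terms of $R$ alone. Santal\'o with $\ell\le 2R$ yields only the \emph{upper} bound $\pi\,\Area(B)\le 2R\,L(\partial B)$, and the obvious Cauchy--Schwarz pairings among $\int\ell\cos\alpha$, $\int\ell^{2}\cos\alpha$, $\int\cos\alpha$ either reproduce that same inequality or leave $L(\partial B)$ unresolved; you never specify which two functions you pair nor how $L$ is eliminated. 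An extra geometric input is needed --- for instance, the diametral chord from any $p\in\partial B$ through $x$ has length exactly $2R$, which already forces $L(\partial B)\ge 4R$ --- and the precise constant $(8-\pi)/2$ then comes from a sharper pointwise analysis of the chord-length profile than a single Cauchy--Schwarz step can deliver. Your four-diameter fallback is likewise incomplete as stated: bounding each quadrant from below would require controlling the Jacobian of $\exp_x$, and the minimizing property of boundary-to-boundary geodesics does not do this directly.
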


The following corollary gives the upper bound of volume entropy involving the area and the injectivity radius. It improves \cite[Proposition 3.6]{katok1983}.
\begin{cor}\label{cor:entropyinj}
    Every surface $M$ of genus $g\geq 1$ satisfies 
    $$h(M) \leq \frac{1}{(1-4\eta)\inj(M)} \log \frac{\frac{2\Area(M)}{\inj^2(M)}-(8-\pi)(\min\limits\{1-7\eta,\frac{1}{2}\})^2}{(8-\pi)\eta^2}, $$
    whenever $0<\eta<\frac{1}{9}$.
\end{cor}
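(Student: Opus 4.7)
The plan is to apply Proposition~\ref{thm:entropy} with a specific choice of $\alpha$ and $\beta$ tuned so that the radii $\alpha\sys(M)$ and $(\beta-3\alpha)\sys(M)$ are expressed in terms of $\inj(M)$, and then to lower-bound the two ``ball area'' quantities appearing in Proposition~\ref{thm:entropy} using Croke's estimate (Lemma~\ref{equ:Croke}). Set $s=\sys(M)$ and $i=\inj(M)$. The natural choice is
\[
\alpha s = \eta\, i, \qquad \beta s = (1-4\eta)\, i,
\]
so that $(\beta-3\alpha)s = (1-7\eta)\, i$. With these values, the factor $\beta s$ in the denominator of Proposition~\ref{thm:entropy} becomes exactly $(1-4\eta)\,\inj(M)$, matching the desired prefactor.

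Next I would verify the hypotheses of Proposition~\ref{thm:entropy}. The condition $5\alpha<\beta$ translates to $5\eta < 1-4\eta$, equivalently $\eta<1/9$, which is precisely the range allowed in the corollary. The condition $\beta+4\alpha<1/2$ becomes $i/s<1/2$, which holds because $\inj(M)\leq \sys(M)/2$ on any closed Riemannian surface; in the borderline case $\inj(M)=\sys(M)/2$ one argues by slightly shrinking $\beta$ and passing to the limit, which is harmless since $h(M)$ is independent of $\eta$.

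The remaining work is to bound the two area terms from below. For the smaller disk $B(q,\alpha s) = B(q,\eta\, i)$, the radius $\eta\, i$ is certainly at most $i/2$ (as $\eta<1/9<1/2$), so the disk is small and Lemma~\ref{equ:Croke} yields
\[
\Area\!\bigl(B(q,\eta\, i)\bigr)\geq \tfrac{8-\pi}{2}\,\eta^2\, i^2.
\]
For the larger disk $B(q,(1-7\eta)\,i)$, the radius may exceed $i/2$ (this happens when $\eta<1/14$), so one cannot apply Croke's estimate directly to the whole disk. Instead, one replaces it by the concentric subdisk of radius $\min\!\{1-7\eta,\tfrac{1}{2}\}\, i$, which is small by construction; monotonicity of area and Lemma~\ref{equ:Croke} then give
\[
\Area\!\bigl(B(q,(1-7\eta)\,i)\bigr)\geq \tfrac{8-\pi}{2}\bigl(\min\!\{1-7\eta,\tfrac{1}{2}\}\bigr)^{2} i^2.
\]

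Substituting these two lower bounds into the right-hand side of Proposition~\ref{thm:entropy} and clearing the common factor $i^2/2$ from the numerator and denominator of the logarithm gives precisely the inequality of Corollary~\ref{cor:entropyinj}. The only delicate point — and it is mild — is the need to introduce the $\min\{1-7\eta,\tfrac{1}{2}\}$ in order to keep the larger disk within the regime where Croke's bound applies; without this truncation one would be applying Lemma~\ref{equ:Croke} outside its range of validity.
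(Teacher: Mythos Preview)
Your proof is correct and follows essentially the same route as the paper: plug $\alpha\sys(M)=\eta\inj(M)$ into Proposition~\ref{thm:entropy}, bound both ball areas via Croke's Lemma~\ref{equ:Croke}, and truncate the larger radius to $\min\{1-7\eta,\tfrac12\}\inj(M)$ so the lemma applies. The only cosmetic difference is that the paper first sets $\beta=\tfrac12-4\alpha$ (the extremal value) and then uses $\sys(M)\geq 2\inj(M)$ to relax $(\tfrac12-4\alpha)\sys(M)\geq(1-4\eta)\inj(M)$, whereas you choose $\beta\sys(M)=(1-4\eta)\inj(M)$ directly; both choices require the same limiting argument at the borderline and yield the identical final inequality.
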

\begin{proof}
Note that $\sys(M)\geq 2 \inj(M)$.

 From Proposition \ref{lem:entropy}, every surface $M$ of genus $g\geq 1$ satisfies 
    $$h(M) \leq \frac{1}{(1/2-4\alpha)\sys(M)} \log \frac{\Area(M)-\min\limits_{q\in M} \Area (B(q,(1/2-7\alpha)\sys(M)))}{\min\limits_{q\in M} \Area (B(q,\alpha\sys(M)))}, $$
    whenever $0<\alpha<1/18$.

Let $\eta$ satisfying $\alpha \sys(M)= \eta\inj(M)$. It follows from $\sys(M)\geq 2\inj(M)$ that  $\eta \geq 2\alpha$.
Assume that $0<\eta<1/9,$ then $0<\alpha<1/18.$ 
It also follows from $\sys(M)\geq 2\inj(M)$ that $$(1/2-7\alpha)\sys(M)\geq(1-7\eta)\inj(M).$$

By Lemma \ref{equ:Croke}, we have 
\[\Area(B(q,\alpha\sys(M))\geq \frac{8-\pi}{2}\eta^2\inj(M)^2,\]
\[\Area(B(q,(1/2-7\alpha)\sys(M)))\geq \frac{8-\pi}{2}(\min\{1-7\eta,\frac{1}{2}\})^2\inj(M)^2. \]

Note that $(1/2-4\alpha)\sys(M)\geq (1-4\eta)\inj(M).$ The statement follows. 
\end{proof}

We then have the following result.
\begin{prop}\label{prop:small}
Suppose $M$ is a surface of genus at least $17$ that satisfies $\inj(M)=\frac{1}{2}\sys(M)$. Then $M$ is Loewner. 
\end{prop}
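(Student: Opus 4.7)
The plan is a proof by contradiction. Assume $\sigma := \sys^2(M)/\Area(M) > 2/\sqrt{3}$; I will trap the product $h(M)\sys(M)$ between a lower and an upper bound that are incompatible when $g \geq 17$.

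For the lower bound, Katok's inequality $h(M)^2 \Area(M) \geq 2\pi(2g-2)$ gives, after multiplying by $\sys^2(M)/\Area(M)$,
$$h(M)\sys(M) \;\geq\; \sqrt{2\pi(2g-2)\,\sigma} \;>\; \sqrt{\frac{4\pi(2g-2)}{\sqrt{3}}}.$$
For the upper bound, the hypothesis $\inj(M) = \sys(M)/2$ substituted into Corollary \ref{cor:entropyinj} converts $2\Area(M)/\inj^2(M)$ into $8/\sigma$ and turns the prefactor $\bigl((1-4\eta)\inj(M)\bigr)^{-1}$ into $2\bigl((1-4\eta)\sys(M)\bigr)^{-1}$. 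Since the argument of the logarithm is decreasing in $\sigma$, using $\sigma > 2/\sqrt{3}$ further yields
$$h(M)\sys(M) \;<\; \frac{2}{1-4\eta}\,\log\frac{4\sqrt{3} - (8-\pi)\bigl(\min\{1-7\eta,\,\frac{1}{2}\}\bigr)^2}{(8-\pi)\,\eta^2}$$
for any $\eta \in (0,1/9)$.

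It remains to choose $\eta$ so that the right-hand side above is strictly smaller than $\sqrt{4\pi(2g-2)/\sqrt{3}}$ when $g = 17$; for $g > 17$ the lower bound only grows while the upper bound is unchanged, so the same $\eta$ will work uniformly. This is the main (and essentially only) obstacle, since the lower bound at $g=17$ is only marginally larger than the infimum of the upper bound over $\eta$. Short numerical minimization singles out $\eta = 1/15$, which lies in the regime $\eta \leq 1/14$ where $\min\{1-7\eta,\,\frac{1}{2}\} = \frac{1}{2}$. With this choice the upper bound becomes
$$\frac{30}{11}\,\log\frac{4\sqrt{3} - (8-\pi)/4}{(8-\pi)/225} \;\approx\; 15.21,$$
whereas the lower bound $\sqrt{128\pi/\sqrt{3}} \approx 15.24$ is strictly larger. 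The resulting contradiction forces $\sigma \leq 2/\sqrt{3}$, so $M$ is Loewner.
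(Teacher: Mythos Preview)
Your proof is correct and follows essentially the same route as the paper: assume $M$ is not Loewner, combine Katok's lower bound for $h(M)$ with the upper bound from Corollary~\ref{cor:entropyinj} (using $\inj(M)=\frac12\sys(M)$), and reach a numerical contradiction for $g\geq 17$. The only cosmetic differences are that the paper rearranges the inequality into the form $g-1 \leq \frac{\sqrt{3}}{8\pi}\cdot\frac{(\log\tilde I)^2}{(1/2-2\eta)^2}$ and uses the (near-optimal) parameter $\eta=0.065734$ rather than your $\eta=1/15$; both choices give a contradiction at $g=17$.
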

\begin{proof}
By Corollary \ref{cor:entropyinj} and the assumption $\inj(M)=\frac{1}{2}\sys(M)$, we obtain 
\begin{eqnarray*}
    h(M)&\leq& \frac{1}{(1-4\eta)\inj(M)} \log \frac{\frac{2\Area(M)}{\inj^2(M)}-(8-\pi)(\min\limits\{1-7\eta,\frac{1}{2}\})^2}{(8-\pi)\eta^2} \\
    &=& \frac{1}{(1/2-2\eta)\sys(M)} \log \frac{\frac{8\Area(M)}{\sys^2(M)}-(8-\pi)(\min\limits\{1-7\eta,\frac{1}{2}\})^2}{(8-\pi)\eta^2} 
\end{eqnarray*}    whenever $0<\eta<\frac{1}{9}.$
Denote the last term by $\frac{1}{(1/2-2\eta)\sys(M)} \log \tilde{I}$. 

Using Katok's inequality of entropy, we have 

\[ \sqrt{\frac{2\pi(2g-2)}{\Area (M)}}\leq h(M) \leq \frac{1}{\beta \sys(M)} \log \tilde{I}. \]

Assume that $M$ is not Loewner, then $\frac{\Area(M)}{\sys(M)^2}<\frac{\sqrt{3}}{2}.$ Therefore,
\begin{eqnarray*}
    g-1
    &\leq& \frac{\sqrt{3}}{8\pi}\cdot \frac{(\log\frac{4\sqrt{3}-(8-\pi)(\min\limits\{1-7\eta,\frac{1}{2}\})^2}{(8-\pi)\eta^2})^2}{(1/2-2\eta)^2}\\
    &\approx& 15.9493,
\end{eqnarray*} where $\eta=0.065734.$
Hence $g\leq 16.$
\end{proof}

\section{Proof of Theorem \ref{thm:loewner}}
Let $(M,g)$ be a surface of genus $g\geq 1$.
Gromov \cite[Proposition 5.1.B]{gromov1983} showed that, 
there is a non-negative function $\rho(x)$ defined on $M$ such that 
every metric disk $B(x,R)$  with radius  $\rho(x)/2 \leq R\leq \sys(M)/2$ satisfies 
\begin{equation}\label{equ:gromov}
    \Area (B(x,R)) \geq \frac{1}{2} (2R-\rho(x))^2.
\end{equation}
The function $\rho(x)$ is called the \emph{height function} of $M$.  It is less than the diameter of $M$ for all surfaces of genus at least $1$.  See \cite[\S 5.1]{gromov1983}
    for details. 

Given any surface $M$ of genus $g\geq 1$, 
one can make conformal deformation of $M$ such that 
the height function becomes arbitrary small. Meanwhile,
the systole is preserved and the area is non-increasing.

\begin{lem}\label{lemma:smallheight}
For an arbitrary $\epsilon>0,$ there exists a Riemannian metric $\overline M$ conformal to $M$ such that 
\begin{itemize}
    \item $\sys(\overline M)=\sys(M).$
    \item $\Area(\overline M)\leq \Area(M).$
    \item The height function of $\overline M$ is less than $\epsilon.$
\end{itemize}
\end{lem}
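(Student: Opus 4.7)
The plan is to construct $\overline{M}$ as a two-step conformal modification of $M$: first, apply a conformal factor $e^{2u}$ with $u \leq 0$ concentrated in small disjoint disks placed throughout $M$, which decreases both area and systole slightly; second, rescale uniformly to restore the systole to its original value at the cost of a small area increase. The intuition is that each such shrunk disk creates a "pinched" region that locally plays the role of Gromov's hair in the recursive construction of $\rho$, and spreading many such pinches throughout $M$ should depress $\rho$ below $\epsilon$ everywhere.

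Concretely, given $\epsilon>0$, I would first select a finite $\epsilon/10$-net $\{p_1,\ldots,p_N\}\subset M$ together with pairwise disjoint geodesic disks $D_i = B(p_i,\delta_i)$ of very small radii $\delta_i$. Next, I would pick a smooth $u : M \to (-\infty,0]$ with support in $\bigsqcup_i D_i$ and $u(p_i) = -K$ for a large parameter $K$, and set $g' = e^{2u}g$. For the systole, since each $D_i$ is simply connected and $u \equiv 0$ on $\partial D_i$, replacing each sub-arc of a non-contractible $g'$-loop inside $D_i$ by a shortest arc on $\partial D_i$ yields a homotopic loop avoiding $\bigsqcup_i D_i$, whence $\sys(g') \geq \sys(M) - O(\sum_i \delta_i)$. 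For the area, $\Area(g') \le \Area(M) - (1-e^{-2K}) \sum_i \Area(D_i)$. Finally, setting $\bar g = \lambda^2 g'$ with $\lambda = \sys(M)/\sys(g')$ restores the systole to $\sys(M)$ exactly, and a careful choice of $\delta_i$ and $K$ should ensure the area gain from $\lambda^2 \ge 1$ is dominated by the area loss from the conformal pinch.

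The main obstacle will be verifying the height function bound. For each $x \in M$, the net property provides a point $p_i$ within $\bar g$-distance at most $\epsilon/3$ of $x$, and the pinched disk $D_i$ should play the role of a "hair" in Gromov's recursive construction of $\rho$ from \cite[Proposition 5.1.B]{gromov1983}; excising it should depress the height at $x$ below $\epsilon$. Making this rigorous will require unpacking Gromov's recursion and showing that the contributions of the $N$ separate pinches combine tamely. The key delicacy is ensuring that the systolic and area estimates remain compatible with the height bound: the disks must be dense enough ($N$ large) to reduce $\rho$ everywhere, yet sparse enough and deep enough ($K$ large, $\delta_i$ small) that the total systole loss is outpaced by the area loss in the conformal factor.
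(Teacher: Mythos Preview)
The paper does not give its own proof of this lemma; it simply cites Gromov \cite[\S 5.6.C$''$]{gromov1983}. So there is no detailed argument in the paper to compare your proposal against. That said, your proposed construction appears to run in the wrong direction relative to Gromov's actual mechanism, and the step you flag as ``the main obstacle'' is not just a technicality but a genuine conceptual gap.

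Recall what a small height function means: $\rho<\epsilon$ requires that for \emph{every} $x$ and every $R\in[\epsilon/2,\sys/2]$, the ball $B(x,R)$ has area at least $\frac{1}{2}(2R-\epsilon)^2$. In other words, small height is equivalent to the surface having \emph{no thin regions} at scales below $\sys/2$. Your construction deliberately manufactures many new thin regions (the deeply pinched disks $D_i$). Near each $p_i$, small balls in $\bar g$ will have abnormally small area, which forces $\rho$ to be \emph{large} there, not small. Your remark that the pinched disk ``should play the role of a hair'' gets the logic backwards: in Gromov's scheme, hairs are precisely the obstructions that make $\rho$ large, and the recursion in \cite[Prop.~5.1.B]{gromov1983} defines $\rho(x)$ essentially as the cost of escaping them. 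Adding hairs cannot reduce that cost.

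Gromov's argument in \cite[\S 5.6.C$''$]{gromov1983} proceeds in the opposite direction: one identifies the hairs already present in $M$ --- maximal disk-type regions with short boundary --- and conformally \emph{collapses} them (taking $\phi\le 1$ supported on the hairs). Since a hair is a topological disk bounded by a short curve, any non-contractible loop passing through it can be rerouted along the boundary at negligible cost, so the systole is preserved; the area only drops; and after all hairs at scale $\epsilon$ are collapsed, the resulting metric satisfies the ball-area lower bound with $\rho<\epsilon$ by construction. If you want to supply an actual proof rather than a citation, this is the outline to flesh out. Your two-step ``pinch then rescale'' template is fine for the systole and area bookkeeping, but the conformal factor must be chosen to \emph{remove} the existing thin parts of $M$, not to introduce new ones.
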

This is proved by Gromov \cite[\S 5.6.C'']{gromov1983}. Note that 
$$\frac{\sys(\overline M)}{\Area(\overline M)} \geq \frac{\sys( M)}{\Area(M)}.$$

Now we can proceed the proof of Theorem \ref{thm:loewner}.

\begin{proof}
Assume  $M$ is not Loewner, then $$\frac{\sys(M)^2}{\Area(M)}>\frac{2}{\sqrt{3}}.$$ 

Let $\delta>0$ be arbitrarily small. By Lemma \ref{lemma:smallheight}, up to a conformal deformation,
we may assume that 
$\frac{\sys(M)^2}{\Area(M)}>\frac{2}{\sqrt{3}}$ and the height function of
$M$ is less than $\delta\sys(M)$. 
Thus for all $q\in M$ and  $\delta\sys(M)/2 \leq R\leq \sys(M)/2$,
$$\Area(B(q, R))\geq \frac{1}{2}(2R-\delta\sys(M))^2.$$

 Assume that $\delta/2\leq \alpha, \beta-3\alpha\leq 1/2$, then
$$\Area(B(q, \alpha\cdot \sys(M)))\geq \frac{1}{2}(2\alpha-\delta)^2\sys(M)^2,$$
$$\Area(B(q, (\beta-3\alpha)\cdot \sys(M)))\geq \frac{1}{2}(2\beta-6\alpha-\delta)^2\sys(M)^2.$$

By Proposition \ref{thm:entropy}, we have 

\begin{eqnarray*}
    h(M) &\leq & \frac{1}{\beta \sys(M)} \log \frac{\frac{\Area(M)}{\sys(M)^2}-\frac{1}{2}(2\beta-6\alpha-\delta)^2}{\frac{1}{2}(2\alpha-\delta)^2} \\
    &\leq& \frac{1}{\beta \sys(M)} \log \frac{\sqrt{3}-(2\beta-6\alpha-\delta)^2}{(2\alpha-\delta)^2},
\end{eqnarray*}
    whenever $2\delta<\alpha<\frac{1}{5}\beta, \beta+4\alpha< \frac{1}{2}$. 
Denote the last term by $\frac{1}{\beta \sys(M)} \log \tilde{I}$. 

Using Katok's inequality of entropy, we have 

\[ \sqrt{\frac{2\pi(2g-2)}{\Area (M)}}\leq h(M) \leq \frac{1}{\beta \sys(M)} \log \tilde{I}. \]

Let $\delta=0.000001,\alpha=0.026377, \beta=0.394491<0.394492=\frac{1}{2}-4\alpha.$ 
Using $\frac{\Area (M)}{\sys(M)^2}<\frac{\sqrt{3}}{2}$ again, we obtain 
\begin{eqnarray*}
    g-1 &<& \frac{\sqrt{3}}{8\pi}\frac{(\log \tilde{I})^2}{\beta^2} \\
    &=&\frac{\sqrt{3}}{8\pi}\frac{(\log \frac{\sqrt{3}-(2\beta-6\alpha-\delta)^2}{(2\alpha-\delta)^2})^2}{\beta^2} \\
    &\approx& 16.8728.
\end{eqnarray*}

Therefore, if $M$ is not Loewner, then $g\leq 17.$

\end{proof}

\section{Proof of Theorem \ref{thm:nonpositive}}
Assume that $M$ is nonpositively curved. By the Bishop-G\"unther volume comparison theorem (ref \cite[Theorem III.4.2]{chavelbook}), we have for all $q\in M$ and $0<R\leq \inj(M),$
    \[\Area(B(q,R))\geq \pi R^2.\]
    Note that $\pi R^2$ is the area of the disk of radius $R$ in the Euclidean plane.

\begin{lemma}\label{lem:CE}
   Let $M$ be a compact Riemannian manifolds. Then 
    the injectivity radius of $M$ equals the minimium of the
conjugate radius and half the length of a shortest nontrivial closed geodesic on
$M$. 
\end{lemma}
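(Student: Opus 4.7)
This is the classical theorem of Klingenberg, and the proof splits into showing two inequalities between $\inj(M)$, the conjugate radius $\conj(M)$, and half the systole $\sys(M)/2$.

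The easy direction is to show $\inj(M)\le \min\{\conj(M),\sys(M)/2\}$. If $\gamma$ is a unit-speed geodesic from $p$ with a conjugate point at parameter $\ell=\conj(M)$, then $\exp_p$ fails to be an immersion at $\ell\gamma'(0)$, so it is not a diffeomorphism on any open ball of radius $>\conj(M)$; hence $\inj(M)\le \conj(M)$. If $\gamma$ is a shortest closed geodesic of length $L=\sys(M)$, choose $p=\gamma(0)$ and $q=\gamma(L/2)$; the two arcs of $\gamma$ from $p$ to $q$ give two distinct geodesics of length $L/2$ joining $p$ to $q$, so $\exp_p$ is not injective on the closed ball of radius $L/2$ in $T_pM$, and therefore $\inj(M)\le L/2=\sys(M)/2$.

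The harder direction is to show $\inj(M)\ge \min\{\conj(M),\sys(M)/2\}$. It suffices to prove the contrapositive form: if $\inj(M)<\conj(M)$, then $2\inj(M)\ge \sys(M)$; that is, there is a closed geodesic of length $\le 2\inj(M)$. The plan is as follows. First, since $M$ is compact and the function $x\mapsto i(x)$ is continuous, there is a point $p_0\in M$ at which $i(p_0)=\inj(M)=:r$. Because $r<\conj(M)$, $\exp_{p_0}$ is a local diffeomorphism on the open ball of radius $r$ in $T_{p_0}M$, while by definition of $r$ it fails to be globally injective on any slightly larger ball. A standard limiting argument then produces a point $q\in M$ with $d(p_0,q)=r$ and two distinct unit-speed minimizing geodesics $\gamma_1,\gamma_2\colon[0,r]\to M$ from $p_0$ to $q$.

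The key step, and the main obstacle, is Klingenberg's lemma proper: to show that $\gamma_1'(r)=-\gamma_2'(r)$, so that the concatenation $\gamma_1*\overline{\gamma_2}$ is a smooth closed geodesic of length $2r$. The argument is variational: if the two incoming velocities at $q$ were not antipodal, one could move $q$ slightly in a direction bisecting the two velocities to obtain a nearby point $q'$ with shorter distance from $p_0$ realized in two distinct ways, producing a point $p_0'$ near $p_0$ with $i(p_0')<r$, contradicting the minimality of $i$ at $p_0$. Making this rigorous uses the first variation formula, together with the fact that the absence of conjugate points up to distance $r$ lets one track the two geodesics under small perturbations. Once the smooth fit is established, we obtain a closed geodesic of length $2r$, giving $\sys(M)\le 2\inj(M)$, and combined with the easy direction this completes the proof.
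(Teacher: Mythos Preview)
Your overall strategy matches the paper's: show $\inj(M)\le\conj(M)$, then assuming $\inj(M)<\conj(M)$, take $p_0$ realizing the minimum of the injectivity radius, find a nearest cut point $q$ with two minimizing geodesics $\gamma_1,\gamma_2$ from $p_0$ to $q$, and argue that they fit together into a closed geodesic of length $2r$. But there is a genuine gap.

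You assert that $\gamma_1'(r)=-\gamma_2'(r)$ makes $\gamma_1*\overline{\gamma_2}$ a \emph{smooth closed geodesic}. It does not: that equality gives smoothness of the concatenation only at $q$. At the basepoint $p_0$ you would also need $\gamma_1'(0)=-\gamma_2'(0)$, and your variational argument (which perturbs $q$) says nothing about this. What Klingenberg's lemma produces is a geodesic \emph{loop} at $p_0$ of length $2r$, possibly with a corner at $p_0$; since the statement concerns the shortest \emph{closed} geodesic, this is insufficient. (Incidentally, your perturbation argument already yields $\inj_{p_0}<r$ directly, so the appearance of a nearby $p_0'$ and the appeal to minimality of $i$ at $p_0$ are unnecessary there.)

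The paper supplies exactly the missing step. Once the loop is smooth at $q$, observe that $p_0$ is then a cut point of $q$ at distance $r$, and $p_0,q$ are not conjugate since $r<\conj(M)$; hence $\inj_q(M)\le r$, and by global minimality $\inj_q(M)=r=\inj(M)$. Now run the same Klingenberg/Cheeger--Ebin argument with the roles of $p_0$ and $q$ swapped: viewing $\gamma$ as a loop based at $q$, its two arcs to the nearest cut point $p_0$ must meet smoothly at $p_0$. This forces $\gamma_1'(0)=-\gamma_2'(0)$, and only then is $\gamma$ a genuine closed geodesic of length $2\inj(M)$.
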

\begin{proof}
    This is a corollary of Cheeger-Ebin \cite[Lemma 5.6]{CE}. We give a proof for the sake of completeness.

     For $p\in M$,
denote the injectivity radius and conjugate radius 
at $p$ by $\inj_p(M)$ and $\operatorname{conj}_p(M)$,
respectively. 
And denote the shortest length of closed nontrivial geodesic on
$M$ by $\ell(M)$. It is obvious that $\inj_p(M)\leq \conj_p(M)$ and thus $\inj(M)\leq \conj(M).$

Suppose $\inj(M)< \operatorname{conj}(M)$. Assume that at some $p\in M$,  $\inj_p(M)=\inj(M)$. 
By Cheeger-Ebin \cite[Lemma 5.6]{CE}, there is a geodesic loop starting and ending at $p$, with length $2\inj_p(M)=2\inj(M)$. Let $q \in \gamma$ that separates $\gamma$ into two segments with the same length $\inj(M)$. Note that $p$ and $q$ are not conjugate since $\inj_p(M)<\operatorname{conj} (M)$. The existence of two geodesic segments between $p$ and $q$ implies that $\inj_q(M)=\inj(M)$. If we look at $\gamma$ as a loop starting and ending at
$q$, then $\gamma$ is also smooth at $p$ by using Cheeger-Ebin \cite[Lemma 5.6]{CE} again. Thus $\gamma$ is a smooth nontrivial closed geodesic.
 This shows that $\inj(M) \geq \frac{\ell(M)}{2}$.

Therefore,
$$\inj(M)=\min\{ \operatorname{conj}(M), \ell(M)/2   \}.$$

\end{proof}

\begin{proposition}\label{prop:injsys}
   Under the assumption that $M$ is nonpositively curved, we have 
   \begin{equation}\label{equ:injsys}
       \inj(M)=\frac{\sys(M)}{2}.
   \end{equation}
\end{proposition}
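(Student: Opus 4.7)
The plan is to deduce the equality directly from Lemma \ref{lem:CE}, which gives $\inj(M) = \min\{\conj(M), \ell(M)/2\}$, where $\ell(M)$ denotes the length of a shortest nontrivial closed geodesic on $M$. It therefore suffices to establish two facts under the hypothesis that $M$ is nonpositively curved: first, that the conjugate radius $\conj(M)$ is infinite; second, that $\ell(M) = \sys(M)$.

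For the first fact, I would invoke the standard consequence of the Cartan--Hadamard theorem: a complete Riemannian manifold of nonpositive sectional curvature has no pairs of conjugate points along any geodesic (the Jacobi fields can grow but never vanish a second time, by a Rauch-type comparison with the Euclidean model). This immediately gives $\conj(M) = +\infty$, so the minimum in Lemma \ref{lem:CE} is realized by $\ell(M)/2$.

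For the second fact, I would argue that every closed geodesic on a nonpositively curved closed surface is noncontractible. Suppose for contradiction that $\gamma$ is a contractible closed geodesic on $M$. Lifting $\gamma$ to the universal cover $\widetilde{M}$ yields a closed geodesic in $\widetilde{M}$, since the contractibility lets us lift the full loop, not only a fundamental-domain piece. But $\widetilde{M}$ is a Hadamard manifold, so the exponential map at any point is a diffeomorphism onto $\widetilde{M}$; in particular, no two distinct geodesic segments from a given point can end at the same point, which rules out any closed geodesic in $\widetilde{M}$. This contradiction shows that every nontrivial closed geodesic on $M$ is noncontractible, hence $\ell(M) = \sys(M)$.

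Combining these two facts with Lemma \ref{lem:CE} yields $\inj(M) = \sys(M)/2$, as desired. There is no real obstacle here — the proposition is essentially a bookkeeping consequence of Lemma \ref{lem:CE} together with the Cartan--Hadamard theorem. The only point that requires a little care is the lifting argument for contractible closed geodesics, where one must use contractibility (rather than just the closed-geodesic property) to lift the entire loop and thus produce a genuine closed geodesic in $\widetilde{M}$ rather than merely a geodesic line.
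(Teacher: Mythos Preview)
Your proposal is correct and follows essentially the same route as the paper: both invoke Lemma~\ref{lem:CE}, use nonpositive curvature to conclude there are no conjugate points, and rule out contractible closed geodesics by lifting to the Cartan--Hadamard universal cover. Your write-up is slightly more detailed on the lifting argument, but the structure and ingredients are identical.
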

\begin{proof}

Since $M$ is nonpositively curved, there is no conjugate points on $M$.  
Moreover, any (nontrivial) closed geodesic $\gamma$ on $M$ is not contractible. Otherwise, 
$\gamma$ lifts to a closed geodesic on the universal cover $\widetilde{M}$. This contradicts the fact that  on a Cartan-Hadamard manifold, any two points are connected by a unique geodesic. 

Now \eqref{equ:injsys} is an immediate corollary of Lemma \ref{lem:CE}.

\end{proof}

Note that for a nonpositively curved surface $M$, we already know that the area of $M$ is at least $\pi\inj(M)^2=\frac{\pi}{4}\sys(M)^2.$  Thus $\frac{\Area(M)}{\sys(M)^2}\geq \frac{\pi}{4},$ while $\frac{\pi}{4}$ is strictly less than $\frac{\sqrt{3}}{2}$. One cannot conclude the Loewner property directly from the nonpositively curved condition. This is different from the case of hyperbolic metric, where we have a better lower bound of the area of $M$ to ensure it is Loewner. 
\bigskip

Now we can proceed the proof of Theorem \ref{thm:nonpositive}. 
\begin{proof}
We will use the argument of Gromov
\cite[Section 5.3]{gromov1983}.

Fix an arbitrary point $x\in M$. Choose some 
$p_0\in M\setminus B(x,\sys(M)/2)$ and choose  $\gamma_1,\cdots, \gamma_{2g}$ in $M\setminus B(x,\sys(M)/2)$ as a canonical basis of of $\pi_1(M,p_0)$. Denote $$\Gamma=\bigcup_{i=1}^{2g}\gamma_i.$$ 
We consider $\Gamma$ as a connected graph. 
The Betti number of a connected graph is given by
$1+E-V$ where $E$ is the number of edges and $V$ is the number of vertices. Thus the Betti number of $\Gamma$,
denoted by $b_1(\Gamma)$, is equal to $2g$.

Let $\mathcal S$ be a maximal set of centers on $\Gamma$ of disjoint disks of radius $\sys(M)/8$ such that $$\{B(q,\sys(M)/4) \ | \  q\in \mathcal S\}$$ forms a cover of $\Gamma$. We assume that $p_0\in \mathcal{S}$.

We first construct a covering of each $\gamma_i$ as following. Denote $\gamma=\gamma_i$ for simplicity.

Start with $p_0=\gamma(0)$. Set $q_0=p_0$. 
Let $t_1$ be the smallest positive number such that $d(\gamma(t_1), p_0=q_0)<\sys(M)/4$ does not hold. Define $p_1=\gamma (t_1)$ and thus $d(p_1,q_0)=\sys(M)/4$. Denote by 
$\overline{p_0p_1}$ the geodesic connecting $p_0$
to $p_1$ and so on. Since 
$\gamma([0, t_1]) \cup \overline{p_0p_1}$ is contained in 
$B(p_0, \sys(M)/4)$, 
the path $\gamma([0,t_1])$ is homotopic to $\overline{p_0 p_1}$.

Choose $q_1\in \mathcal{S}$ such that $d(p_1,q_1)<\sys(M)/4.$ Let $t_2$ be the smallest positive number such that $d(\gamma(t_2), q_1)<\sys(M)/4$ does not hold. Define $p_2=\gamma (t_2)$ and thus $d(p_2,q_1)=\sys(M)/4$. Then the path $\gamma([t_1,t_2])$ is homotopic to $\overline{p_1 p_2}.$

Repeat this process, we obtain a sequence of points $p_0=\gamma(0),\cdots, p_r=\gamma(t_r), p_{r+1}=p_0=\gamma(l)$ on $\gamma$ and $q_0,q_1,\cdots, q_r, q_{r+1}=q_0\in \mathcal S$ satisfying:
\[d(p_i,q_i)<\sys(M)/4,\quad d(p_{i+1},q_i)=\sys(M)/4.\]
And for $t\in [t_r,l]$, $d(\gamma(t),p_r)<\sys(M)/4.$ So the path $\gamma([t_r,l])$ is homotopic to $\overline{p_r p_0}.$

Then the triangle formed by $p_k,q_k, p_{k+1}$ has length less than $2(\sys(M)/4+\sys(M)/4)=\sys(M)$ and thus is null-homotopic. Also, the triangle formed by $p_{k+1}, q_k,q_{k+1}$ has length less than $2(\sys(M)/4+\sys(M)/4)=\sys(M)$ and thus is null-homotopic. Together, we obtain that the circle $\overline{p_kq_k}\cup\overline{q_kq_{k+1}}\cup\overline{p_{k+1}q_{k+1}}\cup\overline{p_kp_{k+1}}$
is homotopically trivial. Thus the curve $\gamma$ can be homotopic to a piecewise geodesic connected by points 
$q_0,q_1,\cdots, q_r, q_{r+1}=q_0$.

For each $\gamma_i$, we obtain that it is homotopic to a piecewise geodesic connected by points $p_0^{(i)},p_1^{(i)},\cdots,p_{r+1}^{(i)}=p_0^{(i)}$ lying in $\gamma_i$ and also a piecewise geodesic connected by points $q_0^{(i)},q_1^{(i)},\cdots,q_{r+1}^{(i)}=q_0^{(i)}$ lying in $\mathcal S.$ Note that $p_0^{(i)}=q_0^{(i)}=q_0.$

Therefore, we obtain an abstract connected graph $\Gamma'$ with vertices $q_k^{(i)}\in\mathcal S$ appeared above, $q, q'$ has an edge if and only if they appear as $q=q_k^{(i)}, q'=q_{k+1}^{(i)}$ for some $i$. There is a (not necessarily injective) map from $\Gamma$ to $\Gamma'$, taking $p_k^{(i)}$ to $q_k^{(i)}$ and the edge $\overline{p_k^{(i)}p_{k+1}^{(i)}}$ to $\overline{q_k^{(i)}q_{k+1}^{(i)}}.$ Denote the image of $\gamma_i$ in $\Gamma'$ by $\sigma_i.$ So $\sigma_i, i=1,\cdots,2g,$ are different nontrivial loops based at $q_0$. 

Moreover, there is a natural map $\iota:\Gamma'\rightarrow M$ by the natural map taking the edge between $q_k,q_{k+1}$ to a fixed geodesic between $q_k,q_{k+1}$. 
Note that by our construction, for $i=1,\cdots,2g$, $\gamma_i$ is homotopic to $\iota(\sigma_i).$

Suppose there is a loop $\sigma_{i_1}^{n_1}\cdots\sigma_{i_l}^{n_l}$ is homotopically trivial in $\Gamma'$. It is clear that $\iota(\sigma_{i_1}^{n_1}\cdots\sigma_{i_l}^{n_l})\in \pi_1(M)$ is also homotopically trivial. Since $\gamma_i$ is homotopic to $\iota(\sigma_i)$, we obtain $\gamma_{i_1}^{n_1}\cdots\gamma_{i_l}^{n_l}$ is also homotopically trivial. 

Therefore, $\pi_1(\Gamma')$ has at least $2g$ generators and thus the Betti number of $\Gamma'$, $b_1(\Gamma')$, is not less than  $2g$. 

Let $N$ denote the number of vertices in $\Gamma'$. So $N\leq |\mathcal S|$. Denote the number of edges in $\Gamma'$ by $K$. Then $K\leq \frac{N(N-1)}{2}$. So
\begin{eqnarray*}
    2g\leq b_1(\Gamma')
    & \leq&\frac{N(N-1)}{2}-N+1 \\
&=&\frac{(N-1)(N-2)}{2} \\
& \leq& \frac{(|\mathcal S|-1)(|\mathcal S|-2)}{2} .
\end{eqnarray*}
Note that 
\[\Area(B(p,\sys(M)/8)) \geq \pi \left(\sys(M)/8\right)^2,\quad\Area(B(p,3\sys(M)/8)) \geq \pi \left(3\sys(M)/8\right)^2.\]
Since the disjoint balls $B(q,\sys(M)/8)(q\in \mathcal S)$ lie in $M\setminus B(x, 3\sys(M)/8)$, we have 
\begin{eqnarray*}
     |\mathcal{S}| &\leq& \frac{\Area(M)-\Area(B(x,\frac{3}{8}\sys(M)))}{\pi (\sys(M)/8)^2}\\
    &\leq& \frac{\Area(M)-\pi(\frac{3}{8}\sys(M))^2}{\pi (\sys(M)/8)^2}\\
    &\leq& \frac{64}{\pi}\frac{\Area(M)}{\sys(M)^2}-9.
\end{eqnarray*}

Assume that $M$ is not Loewner, then $\frac{\Area(M)}{\sys(M)^2}<\frac{\sqrt{3}}{2}.$
The above inquality of $|\mathcal{S}|$
implies that $|\mathcal{S}| \leq \frac{32\sqrt{3}}{\pi}-9=8.64252$ and thus $|\mathcal{S}|\leq 8$.

Using $2g\leq (|\mathcal{S}|-1)(|\mathcal{S}|-2)/2$, we obtain 
\[g\leq \frac{(|\mathcal{S}|-1)(|\mathcal{S}|-2)}{4}=10.25.\]
Therefore, if $M$ is not Loewner, then $g\leq 10.$
\end{proof}
\begin{remark}
Gromov \cite[Proposition 5.3.A]{gromov1983} proved
 the same inequality 
 $$2g\leq \frac{(N-1)(N-2)}{2},$$
 where the graph $\Gamma$ is given by a short basis of the homology group.  Then he obtain a lower bound  for
 $\frac{\Area(M)}{\sys'(M)^2} $, where $\sys'(M)$ denotes the homological systole. 
\end{remark}

\bibliographystyle{siam}

\end{document}